\title[Invariant submanifolds of Lorentzian trans-Sasakian manifolds... \ldots]
{Tachibana operator Applied to Invariant submanifolds of Lorentzian trans-Sasakian manifolds}
\author{Mehmet At\d {c}eken and Tugba Mert }
\address{Aksaray University, Faculty of Arts and Sciences, Department of Mathematics,  68100, Aksaray, TURKEY}
\email{mehmet.atceken382@gmail.com}
\address{Sivas Cumhuriyet University, Faculty of Sciences, Department of Mathematics,  58100, Sivas, TURKEY}
\email{tmert@cumhuriyet.edu.tr}
\subjclass{53C15; 53C42} \keywords{ Lorentzian trans-Sasakain
manifold, Invariant submanifold and Tachibana tensor.}
\newtheorem{theorem}{Theorem}[section]
\newtheorem{example}[theorem]{Example}
\newtheorem{proposition}[theorem]{Proposition}
\begin{document}
    \maketitle
\begin{abstract}
In the present paper, Tachibana operatory is applied to an invariant submanifold of a lorentzian trans-Sasakian manifold by means of through various tensors and the results obtained are discussed in terms of geometry. Finally, we give a non-trivial example in order to our results illustrate.
\end{abstract}
\section{Introduction}

An differentiable manifold $\widetilde{M}^{2n+1}$ which carries a
field $\phi$ of endomorphism of the tangent space, a timelike vector
$\xi$, called characteristic vector field, a 1-form $\eta$ and the
lorentzian metric $g$ satisfying
\begin{eqnarray}
\phi^2=I+\eta\otimes\xi, \ \ \eta(\xi)=-1,\label{1}
\end{eqnarray}
\begin{eqnarray}
g(\phi{X},\phi{Y})=g(X,Y)+\eta(X)\eta(Y), \ \
\eta(X)=g(X,\xi)\label{2}
\end{eqnarray}
and
\begin{eqnarray}
(\widetilde{\nabla}_X\phi)Y=\alpha\{g(X,Y)\xi-\eta(Y)X\}+\beta\{g(\phi{X},Y)-\eta(Y)\phi{X}\}\label{3},
\end{eqnarray}
for all $X,Y\in\Gamma(T\widetilde{M})$\cite{1}, where $\widetilde{\nabla}$
denote the Levi-Civita connection and $\alpha$, $\beta$ are smooth
functions on $\widetilde{M}$. From (\ref{2}), we have
\begin{eqnarray}
\widetilde{\nabla}_X\xi=-\alpha\phi{X}-\beta\phi^2{X},\label{4}
\end{eqnarray}
\begin{eqnarray}
(\widetilde{\nabla}_X\eta)Y=\alpha{g}(\phi{X},Y)+\beta{g}(\phi{X},\phi{Y}).\label{5}
\end{eqnarray}

As special cases, if $\alpha=0$ and $\beta\in\textbf{R}$ the set of
real numbers, then the manifold reduces to a Lorentzian
$\beta$-Kenmotsu manifold, if $\beta=0$ and $\alpha\in\textbf{R}$,
real number, then the manifold reduces to a Lorentzian
$\alpha$-Sasakian manifold. On the other hand, $\alpha=0$ and
$\beta=1$, the manifold reduces Lorentzian Kenmotsu manifold
introduced by Mihai Oiaga and Rosca\cite{7}. Furthermore, if
$\beta=0$ and $\alpha=1$, then the manifold reduces to a Lorentzian Sasakian manifold invariant submanifolds have been studied in different structures and working on\cite{7,4,5,6}.\\

In the present paper, we
moved to lorentzian trans-Sasakain manifold and research cases of $Q(S,\sigma)=0$, $Q(g,\sigma)=0$ $Q(g,R\cdot{\sigma})=0$, $Q(g,\widetilde{\nabla}\cdot{\sigma})=0$, $Q(S,\sigma)=0$,
$Q(S,\nabla\sigma)=0$, $Q(S,R\cdot\sigma)=0$,
$Q(g,C\cdot\sigma)=0$ and $Q(S,\widetilde{C}\cdot\sigma)=0$ for an invariant submanifold of lorentzian trans-Sasakian manifold. In these cases, we examine the properties reduced by both the ambient manifold and the necessary submanifold.\\

In this connection, we need the following proposition for later
used.
\begin{proposition}
Let $\widetilde{M}^{2n+1}(\phi,\xi,\eta,g)$ an lorentzian
trans-Sasakain manifold, we denote the Riemannian curvature and
Ricci tensors by $\widetilde{R}$ and $\widetilde{S}$, respectively,
then we have
\begin{eqnarray}
\widetilde{R}(X,Y)\xi&=&(\alpha^2+\beta^2)[\eta(Y)X-\eta(X)Y]+2\alpha\beta[\eta(Y)\phi{X}-\eta(X)\phi{Y}]\nonumber\\
&+&Y(\alpha)\phi{X}-X(\alpha)\phi{Y}+Y(\beta)\phi^2X-X(\beta)\phi^2Y,\label{6}\\
&&\eta(\widetilde{R}(X,Y)Z)=(\alpha^2+\beta^2)g(\eta(Y)X-\eta(X)Y,Z)\label{7}\\
&&\widetilde{R}(\xi,X)\xi=(\alpha^2+\beta^2-\xi(\beta))\phi^2X+(2\alpha\beta-\xi(\alpha))\phi{X},\label{8}\\
&&\widetilde{S}(X,\xi)=[2n(\alpha^2+\beta^2)-\xi(\beta)]\eta(X)+(2n-1)X(\beta)\nonumber\\
&-&(\phi{X})(\alpha)\label{9}
\end{eqnarray}
\end{proposition}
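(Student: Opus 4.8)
The plan is to derive all four formulas from the two first-order structure equations \eqref{4} and \eqref{3}, treating \eqref{6} as the master identity and reading off \eqref{7}, \eqref{8} and \eqref{9} from it. First I would establish \eqref{6} by expanding $\widetilde{R}(X,Y)\xi=\widetilde{\nabla}_X\widetilde{\nabla}_Y\xi-\widetilde{\nabla}_Y\widetilde{\nabla}_X\xi-\widetilde{\nabla}_{[X,Y]}\xi$ and inserting \eqref{4} for each inner derivative. Differentiating $\widetilde{\nabla}_Y\xi=-\alpha\phi{Y}-\beta\phi^2{Y}$ by the Leibniz rule splits the computation into the scalar-derivative terms in $X(\alpha),X(\beta)$, which after antisymmetrizing in $X,Y$ become the four trailing terms $Y(\alpha)\phi{X}-X(\alpha)\phi{Y}+Y(\beta)\phi^2X-X(\beta)\phi^2Y$, and the terms $-\alpha\widetilde{\nabla}_X(\phi{Y})$ and $-\beta\widetilde{\nabla}_X(\phi^2{Y})$. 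For the first I would write $\widetilde{\nabla}_X(\phi{Y})=(\widetilde{\nabla}_X\phi)Y+\phi\widetilde{\nabla}_X{Y}$ and substitute \eqref{3}; for the second I would use $\phi^2{Y}=Y+\eta(Y)\xi$ together with \eqref{4} and \eqref{5}.

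After antisymmetrizing, every term containing $\widetilde{\nabla}_X{Y}$, $\widetilde{\nabla}_Y{X}$ or $[X,Y]$ should cancel, and the surviving pieces, simplified through $\phi^2=I+\eta\otimes\xi$ and \eqref{2}, are expected to assemble into the coefficients $(\alpha^2+\beta^2)$ and $2\alpha\beta$ on the bracketed expressions. This bookkeeping is the main obstacle: one must keep the $\phi$- and $\phi^2$-valued contributions separate, track the products $\alpha^2$, $\alpha\beta$, $\beta^2$ arising from composing \eqref{3} with \eqref{4}, and not lose the first derivatives of $\alpha$ and $\beta$. The other three identities are comparatively routine consequences.

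Indeed, \eqref{8} is the specialization of \eqref{6} with first argument $\xi$: using $\phi\xi=0$, $\phi^2\xi=0$ and $\eta(\xi)=-1$ the bracketed terms collapse, and rewriting $X+\eta(X)\xi=\phi^2X$ produces exactly $(\alpha^2+\beta^2-\xi(\beta))\phi^2X+(2\alpha\beta-\xi(\alpha))\phi{X}$. For \eqref{7} I would pair \eqref{6} with a vector $Z$ through $g$ and use the skew-symmetry $g(\widetilde{R}(X,Y)\xi,Z)=-g(\widetilde{R}(X,Y)Z,\xi)$, which expresses $\eta(\widetilde{R}(X,Y)Z)$ in terms of the right-hand side of \eqref{6} and isolates the $(\alpha^2+\beta^2)$ term recorded in the statement.

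Finally, \eqref{9} follows by contracting \eqref{6}: writing $\widetilde{S}(X,\xi)=\sum_i\epsilon_ig(\widetilde{R}(e_i,X)\xi,e_i)$ over a $g$-orthonormal frame $\{e_i\}$ with $\epsilon_i=g(e_i,e_i)$, the traces $\mathrm{tr}\,\phi=0$, $\mathrm{tr}\,\phi^2=(2n+1)+\eta(\xi)=2n$ and the relation $\eta\circ\phi=0$ annihilate the $2\alpha\beta$- and $X(\alpha)$-type sums, while $\sum_i\epsilon_ie_i(\beta)g(\phi^2X,e_i)=(\phi^2X)(\beta)=X(\beta)+\eta(X)\xi(\beta)$ and the analogous $\alpha$-sum $\sum_i\epsilon_ie_i(\alpha)g(\phi{X},e_i)=(\phi{X})(\alpha)$ give the terms $(2n-1)X(\beta)$, $-(\phi{X})(\alpha)$ and $-\xi(\beta)\eta(X)$; combining the two $(\alpha^2+\beta^2)$ traces as $(2n+1)-1=2n$ yields the coefficient $2n(\alpha^2+\beta^2)\eta(X)$, which is precisely \eqref{9}. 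I expect \eqref{6} to be the only genuinely delicate step, with \eqref{7}, \eqref{8} and \eqref{9} being clean algebraic consequences.
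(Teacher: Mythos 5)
First, a point of reference: the paper contains no proof of this Proposition at all --- it is quoted with a citation to \cite{1} --- so your attempt can only be measured against the structure equations \eqref{1}--\eqref{5} themselves. Against that standard, your route for \eqref{6} and \eqref{8} is the standard one and it works: expanding $\widetilde{R}(X,Y)\xi$ via the curvature definition, inserting \eqref{3}, \eqref{4}, \eqref{5}, and antisymmetrizing does yield \eqref{6}, and the specialization $X=\xi$ with $\phi\xi=0$, $\phi^2\xi=0$, $\eta(\xi)=-1$ collapses it to \eqref{8} exactly as you describe. One step you should make explicit: the cancellation of the $\xi$-components of the $\alpha\beta$-terms uses that $\phi$ is $g$-\emph{symmetric} for this structure ($g(\phi X,Y)=g(X,\phi Y)$, a consequence of \eqref{1}, \eqref{2}, $\phi\xi=0$, $\eta\circ\phi=0$); with a skew $\phi$, as in the Riemannian almost contact case, those terms would survive. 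Relatedly, in the contraction giving \eqref{9} your inputs $\mathrm{tr}\,\phi^2=2n$ and $(\phi^2X)(\beta)=X(\beta)+\eta(X)\xi(\beta)$ are correct, but $\mathrm{tr}\,\phi=0$ is \emph{not} automatic here: a symmetric $\phi$ with $\phi^2=I$ on $\ker\eta$ has trace equal to the difference of the ranks of its $\pm1$ eigenbundles, so vanishing must be imposed or justified (it is what kills the unwanted terms $2\alpha\beta\,\eta(X)\,\mathrm{tr}\,\phi$ and $X(\alpha)\,\mathrm{tr}\,\phi$). That is a small but genuine gap in your proof of \eqref{9}.

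The step that actually fails is \eqref{7}. Your own method --- pairing \eqref{6} with $Z$ and using $g(\widetilde{R}(X,Y)Z,\xi)=-g(\widetilde{R}(X,Y)\xi,Z)$ --- does not ``isolate the $(\alpha^2+\beta^2)$ term recorded in the statement''; carried out honestly it gives
\begin{eqnarray*}
\eta(\widetilde{R}(X,Y)Z)&=&(\alpha^2+\beta^2)g(\eta(X)Y-\eta(Y)X,Z)-2\alpha\beta[\eta(Y)g(\phi{X},Z)-\eta(X)g(\phi{Y},Z)]\\
&&-\,Y(\alpha)g(\phi{X},Z)+X(\alpha)g(\phi{Y},Z)-Y(\beta)g(\phi^2X,Z)+X(\beta)g(\phi^2Y,Z),
\end{eqnarray*}
which differs from \eqref{7} in two essential ways: the $(\alpha^2+\beta^2)$ term carries the \emph{opposite} sign, and there are gradient terms in $\alpha,\beta$ that do not vanish for general trans-Sasakian functions (take $X,Y,Z$ orthogonal to $\xi$: \eqref{7} predicts $0$, the display above does not). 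In other words, \eqref{6} and \eqref{7} as printed are mutually inconsistent under the standard curvature symmetry, so no correct argument can deliver both; the defect is in the paper's statement (presumably a transcription error from \cite{1}), but a proof attempt must flag it rather than assert that the computation closes. The correct conclusion of your skew-symmetry argument is the displayed identity (equivalently, \eqref{7} with the roles of $X$ and $Y$ exchanged and with the derivative terms retained, which do disappear only when $d\alpha$ and $d\beta$ are proportional to $\eta$).
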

for all vector fields $X,Y$ on $\widetilde{M}^{2n+1}$\cite{1}.\\

Now, let $M$ be an immersed submanifold of a lorentzian
trans-Sasakian manifold $\widetilde{M}^{2n+1}$. By $\Gamma(TM)$ and
$\Gamma(T^\bot{M})$, we denote the tangent and normal subspaces of
$M$ in $\widetilde{M}$. Then the Gauss and Weingarten formulae are,
respectively, given by
\begin{eqnarray}
\widetilde{\nabla}_XY=\nabla_XY+\sigma(X,Y),\label{10}
\end{eqnarray}
and
\begin{eqnarray}
\widetilde{\nabla}_XV=-A_VX+\nabla^\bot_XV,\label{11}
\end{eqnarray}
for all $X,Y\in\Gamma(TM)$ and $V\in\Gamma(T^\bot{M})$, where
$\nabla$ and $\nabla^\bot$  are the induced connections on $M$ and
$\Gamma(T^\bot{M})$ and $\sigma$ and $A$ are called the second
fundamental form and shape operator of $M$, respectively. Also,
$\Gamma(TM)$ denotes the set differentiable vector fields on $M$.
They are related by
\begin{eqnarray}
g(A_VX,Y)=g(\sigma(X,Y),V). \label{12}
\end{eqnarray}
The covariant derivative of $\sigma$ is defined by
\begin{eqnarray}
(\widetilde{\nabla}_X\sigma)(Y,Z)=\nabla^\bot_X\sigma(Y,Z)-\sigma(\nabla_XY,Z)-\sigma(Y,\nabla_XZ),\label{13}
\end{eqnarray}
for all $X,Y,Z\in\Gamma(TM)$\cite{4}. If
$\widetilde{\nabla}\sigma=0$, then submanifold is said to be its
second fundamental form is parallel. On the other hand, the
submanifold $M$ is said to be Chaki pseudo-parallel if there exists
a 1-form $\gamma$ such that
    \begin{eqnarray}
    (\widetilde{\nabla}_X\sigma)(Y,Z)&=&2\gamma(X)\sigma(Y,Z)+\gamma(Y)\sigma(X,Z)\nonumber\\
    &+&\gamma(Z)\sigma(X,Y),\label{14}
\end{eqnarray}
for all $X,Y\in(TM)$. \\

By $R$, we denote the Riemannian curvature tensor of the submanifold
$M$, we have the following Gauss equation
\begin{eqnarray}
\widetilde{R}(X,Y)Z&=&R(X,Y)Z+A_{\sigma(X,Z)}Y-A_{\sigma(Y,Z)}X+(\widetilde{\nabla}_X\sigma)(Y,Z)\nonumber\\
&-&(\widetilde{\nabla}_Y\sigma)(X,Z)\label{15},
\end{eqnarray}
for all $X,Y,Z\in\Gamma(TM)$.\\

On the other hand, the concircular curvature tensor $\mathcal{C}$ on a pseudo-Riemannian manifold $(M^{2n+1},g)$ is
defined as follows
\begin{eqnarray}
\mathcal{C}(X,Y)Z=R(X,Y)Z-\dfrac{\tau}{2n(2n+1)}\{g(Y,Z)X-g(X,Z)Y\},\label{16}
\end{eqnarray}
for all $X,Y,Z\in\Gamma(TM)$\cite{2}, where $\tau$ denotes the scalar curvature of $M^{2n+1}$.\\

For a $(0,k)$-type tensor field $T$, $k\geq{1}$ and a $(0,2)$-type
tensor field $A$ on a Riemannian manifold $(M,g)$, $Q(A,T)$-Tachibana operator
is defined by
\begin{eqnarray}
Q(A,T)(X_1,X_2,...,X_k;X,Y)&=&-T((X\wedge_AY)X_1,X_2,...,X_k)...\nonumber\\
&-&T(X_1,X_2,...X_{k-1},(X\wedge_AY)X_k),\label{17},
\end{eqnarray}
for all $X_1,X_2,...,X_k,X,Y\in\Gamma(TM)$\cite{2}, where the
endormorphism $\wedge_A$ is defined by
\begin{eqnarray}
(X\wedge_AY)Z=A(Y,Z)X-A(X,Z)Y. \label{18}
\end{eqnarray}

\section{Invariant Submanifolds of Lorentzian trans-Sasakain manifolds.}
The geometry of submanifolds of a contact(paracontact, contact and product structures) metric manifold is depend on the behaviour of contact metric structure $\phi$. Namely, a submanifold $M$ of a lorentzian trans-Sasakian manifold is said to be invariant if the structure vector field $\xi$ is tangent to $M$ at every point of $M$ and $\phi{X}$ is tangent to $M$ for any vector feld $X$ tangent to $M$ at every point of $M$. In other words, $\phi(TM)\subset(TM)$ at each point of $M$.\\

    In the submanifolds theory, we note that the geometry of invariant submanifolds inherits almost all properties of the ambient manifolds. Therefore, invariant submanifolds are an active and fruitful research field playing a significant role in the development of modern differential geometry. In this connection, the papers related to invariant submanifolds has been studied and continues to be studied on the different structures.\\

    In the rest of this paper, we will assume that $M$ an invariant submanifold of lorentzian trans-Sasakian manifold $\widetilde{M}$ unless otherwise stated.\\

    So we need the following Theorem for later used.
\begin{theorem}
Let $M$ be an immersed submanifold of a lorentzian trans-Sasakian
manifold $\widetilde{M}(\phi,\xi,\eta,g)$.
By $R$ and $\sigma$, we denote the Riemannian curvature tensor and second fundamental form of submanifold $M$, respectively. Then the following relations hold;\\
\begin{eqnarray}
\widetilde{R}(X,Y)\xi&=&R(X,Y)\xi,\label{19}\\
\sigma(X,\phi{Y})&=&\sigma(\phi{X},Y)=\phi\sigma(X,Y),\\\label{20}
\sigma(X,\xi)&=&A_V\xi=0,\label{21}
\end{eqnarray}
for all $X,Y\in\Gamma(TM)$ and $V\in\Gamma(T^\bot{M})$.
\end{theorem}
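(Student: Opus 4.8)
The plan is to establish the three identities by exploiting the invariance condition $\phi(TM)\subset TM$ together with the defining relations (\ref{1})--(\ref{5}) of the trans-Sasakian structure and the Gauss--Weingarten apparatus (\ref{10})--(\ref{12}). The central observation I would use throughout is that on an invariant submanifold the characteristic vector field $\xi$ is tangent, so its ambient covariant derivative $\widetilde{\nabla}_X\xi$ given by (\ref{4}) must be compared against its decomposition via Gauss's formula (\ref{10}).

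For (\ref{21}), I would start by computing $\widetilde{\nabla}_X\xi$ in two ways. On one hand, (\ref{4}) gives $\widetilde{\nabla}_X\xi=-\alpha\phi X-\beta\phi^2 X$, and since $M$ is invariant both $\phi X$ and $\phi^2 X=X+\eta(X)\xi$ are tangent to $M$; hence the right-hand side is purely tangential. On the other hand, Gauss's formula (\ref{10}) decomposes $\widetilde{\nabla}_X\xi=\nabla_X\xi+\sigma(X,\xi)$ into tangential and normal parts. Matching normal components forces $\sigma(X,\xi)=0$. The statement $A_V\xi=0$ then follows immediately from the duality (\ref{12}): $g(A_V\xi,Y)=g(\sigma(\xi,Y),V)=0$ for every tangent $Y$, using the symmetry of $\sigma$ and the result just proved.

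For (\ref{20}), the key tool is the covariant derivative formula (\ref{3}) for $\phi$ combined with Gauss's formula. I would write $\widetilde{\nabla}_X(\phi Y)=(\widetilde{\nabla}_X\phi)Y+\phi(\widetilde{\nabla}_X Y)$, then expand both sides through (\ref{10}): the left side gives $\nabla_X(\phi Y)+\sigma(X,\phi Y)$, while the right side, using invariance to keep $\phi$ acting within $TM$ and $T^\bot M$ separately, yields $\phi\nabla_X Y+\phi\sigma(X,Y)$ plus the tangential correction terms coming from (\ref{3}). Since (\ref{3}) produces only tangential contributions (each summand involves $\xi$, $X$, $\phi X$, all tangent), comparing normal components isolates $\sigma(X,\phi Y)=\phi\sigma(X,Y)$. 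The symmetry $\sigma(X,\phi Y)=\sigma(\phi X,Y)$ follows from the symmetry of $\sigma$ in its arguments once one checks the parallel computation, or directly from the self-adjointness of $\phi$ with respect to $g$ via (\ref{2}).

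Finally, (\ref{19}) is the most delicate and I expect it to be the main obstacle. The natural route is the Gauss equation (\ref{15}) specialized to $Z=\xi$: the terms $A_{\sigma(X,\xi)}Y$ and $A_{\sigma(Y,\xi)}X$ vanish by (\ref{21}), so what remains is $\widetilde{R}(X,Y)\xi=R(X,Y)\xi+(\widetilde{\nabla}_X\sigma)(Y,\xi)-(\widetilde{\nabla}_Y\sigma)(X,\xi)$. The difficulty is showing the Codazzi-type terms cancel; I would expand $(\widetilde{\nabla}_X\sigma)(Y,\xi)$ via (\ref{13}) as $\nabla^\bot_X\sigma(Y,\xi)-\sigma(\nabla_X Y,\xi)-\sigma(Y,\nabla_X\xi)$, observe that the first two terms vanish by (\ref{21}), and handle the surviving term $-\sigma(Y,\nabla_X\xi)$ by substituting $\nabla_X\xi$ (the tangential part of (\ref{4})) and invoking $\sigma(\cdot,\phi\,\cdot)=\phi\sigma$ together with $\sigma(\cdot,\xi)=0$. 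The careful bookkeeping to confirm that the antisymmetrized remainder is zero is where the genuine work lies; once it vanishes, the normal part of $\widetilde{R}(X,Y)\xi$ disappears and the identity reduces to the purely tangential $\widetilde{R}(X,Y)\xi=R(X,Y)\xi$.
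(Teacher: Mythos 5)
Your proposal is correct, and it cannot be compared against the paper's own argument for the simple reason that the paper offers none: the authors dismiss the proof as ``simple calculations.'' Your three computations are exactly the standard ones presumably intended. Moreover, the step you flag as the ``genuine work'' in (\ref{19}) in fact closes in one line: by (\ref{13}), the identity $\sigma(\cdot,\xi)=0$ and the tangential part of (\ref{4}),
\begin{eqnarray*}
(\widetilde{\nabla}_X\sigma)(Y,\xi)=-\sigma(Y,\nabla_X\xi)
=\alpha\sigma(Y,\phi X)+\beta\sigma(Y,X)
=\alpha\phi\sigma(X,Y)+\beta\sigma(X,Y),
\end{eqnarray*}
which is \emph{symmetric} in $X$ and $Y$; hence the two Codazzi terms in the Gauss equation (\ref{15}) with $Z=\xi$ cancel identically, and $\widetilde{R}(X,Y)\xi=R(X,Y)\xi$ follows with no further bookkeeping. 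Two small points you should make explicit rather than leave implicit: (i) isolating normal components in your proof of (\ref{20}) requires $\phi\sigma(X,Y)\in\Gamma(T^\bot M)$, which holds because $\phi$ is self-adjoint with respect to $g$ (a consequence of (\ref{2}), noting that for this structure $\phi$ is symmetric, not skew) and $TM$ is $\phi$-invariant, so $g(\phi V,X)=g(V,\phi X)=0$ for normal $V$ and tangent $X$; (ii) concluding $A_V\xi=0$ from $g(A_V\xi,Y)=0$ for all tangent $Y$ uses non-degeneracy of the induced metric on $M$, which is guaranteed here since the timelike direction $\xi$ is tangent to $M$.
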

\begin{proof}
Since the proofs are a result of simple calculations, we think that to be unnecessary to give.
\end{proof}
\begin{theorem}
	Let $M$ be an invariant submanifold of a lorentzian trans-Sasakian manifold $\widetilde{M}$. If $Q(S,\sigma)=0$, then $M$ is either totally geodesic submanifold or $\xi(\beta)=\alpha^2+\beta^2$.
\end{theorem}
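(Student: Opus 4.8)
The plan is to write out the hypothesis $Q(S,\sigma)=0$ explicitly using definitions (\ref{17}) and (\ref{18}). Since $\sigma$ is a $(0,2)$-type tensor (valued in the normal bundle), the condition reads
\begin{eqnarray}
Q(S,\sigma)(X_1,X_2;X,Y)=-\sigma((X\wedge_SY)X_1,X_2)-\sigma(X_1,(X\wedge_SY)X_2)=0,\nonumber
\end{eqnarray}
where $(X\wedge_SY)Z=S(Y,Z)X-S(X,Z)Y$. First I would substitute this endomorphism into the two $\sigma$-terms and expand by bilinearity, producing four terms of the form $S(Y,X_i)\sigma(X,X_j)$ and $S(X,X_i)\sigma(Y,X_j)$.

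**Specializing the slot variables.**

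The key idea is to choose the arguments cleverly so that the Ricci evaluations collapse via Theorem~1 (equations (\ref{19})--(\ref{21})) and Proposition~1 (equation (\ref{9})). The natural choice is to set $Y=\xi$ (or one of the $X_i$ equal to $\xi$), because then $\sigma(\cdot,\xi)=0$ by (\ref{21}) kills some terms while $S(\cdot,\xi)$ is computed from (\ref{9}). Concretely I would take $X_1=\xi$ and leave $X,Y,X_2$ general; the surviving terms then involve $\widetilde{S}(X,\xi)$ and $\widetilde{S}(Y,\xi)$, which by (\ref{9}) carry the factor $2n(\alpha^2+\beta^2)-\xi(\beta)$ together with derivative terms in $\alpha,\beta$. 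Because $M$ is invariant, the induced Ricci tensor agrees with the ambient one on tangent directions in the relevant slots, so (\ref{9}) applies directly.

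**Extracting the dichotomy.**

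After the substitution, the vanishing condition becomes an identity of the schematic form $\big(\text{scalar involving }\alpha^2+\beta^2-\xi(\beta)\big)\,\sigma(X,X_2)=0$, holding for all tangent $X,X_2$. Here I would use $\eta(\xi)=-1$ from (\ref{1}) to evaluate $S(\xi,\xi)$, which contributes the clean factor $\alpha^2+\beta^2-\xi(\beta)$ once the higher-derivative pieces cancel against each other by antisymmetry in $X\leftrightarrow Y$. Since a product of a scalar function and a tensor vanishes identically, either the scalar is zero everywhere, giving $\xi(\beta)=\alpha^2+\beta^2$, or $\sigma\equiv0$, which means $M$ is totally geodesic. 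This produces exactly the stated dichotomy.

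**Anticipated obstacle.**

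The main technical obstacle will be controlling the derivative terms $X(\beta)$, $(\phi X)(\alpha)$, and the analogous $Y$-terms coming from (\ref{9}): I expect these to either cancel by the antisymmetry built into the wedge operator $\wedge_S$ or to be absorbed because the $\sigma(\cdot,\xi)=0$ relation eliminates their companions. Verifying that these extraneous terms genuinely drop out — rather than contributing additional constraints — is the delicate step, and it is what forces the clean scalar factor $\alpha^2+\beta^2-\xi(\beta)$ to emerge. If any derivative term survived, the conclusion would acquire extra hypotheses, so the crux is showing that the invariance of $M$ together with (\ref{21}) suffices to suppress them.
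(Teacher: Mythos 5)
Your proposal is correct and follows essentially the same route as the paper: expand $Q(S,\sigma)$ via (\ref{17})--(\ref{18}), put $\xi$ into one slot of $\sigma$ and one wedge argument, and use $\sigma(\cdot,\xi)=0$ from (\ref{21}) together with $S(\xi,\xi)=-2n(\alpha^2+\beta^2-\xi(\beta))$ from (\ref{9}) to obtain $2n(\alpha^2+\beta^2-\xi(\beta))\,\sigma(U,Y)=0$, hence the dichotomy; the paper does exactly this with the substitution $X=V=\xi$. The only small inaccuracy is your anticipated cancellation ``by antisymmetry'': the derivative terms $X(\beta)$, $(\phi X)(\alpha)$ are in fact eliminated because they multiply $\sigma(\cdot,\xi)=0$, while $\phi\xi=0$ and $\eta(\xi)=-1$ make the surviving $\xi(\beta)$ part of the final factor rather than cancel.
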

\begin{proof}
$Q(S,\sigma)=0$ means that
\begin{eqnarray}
Q(S,\sigma)(U,V;X,Y)&=&\sigma((X\wedge_SY)U,V)+\sigma(V,(X\wedge_SY)U)\nonumber\\
&=&\sigma(S(Y,U)X-S(X,U)Y,V)\nonumber\\
&+&\sigma(U,S(Y,V)X-S(X,V)Y)=0,\label{22}
\end{eqnarray}
for all $X,Y,U,V\in\Gamma(TM)$. Taking $X=V=\xi$ in (\ref{22}) and by using (\ref{9}), we have
\begin{eqnarray*}
-2n(\alpha^2+\beta^2-\xi(\beta))\sigma(U,Y)=0,
\end{eqnarray*}
which proves our assertion.
\end{proof}
\begin{theorem}
Let $M$ be a invariant submanifold of a lorentzian trans-Sasakian manifold $\widetilde{M}$. If $Q(g,\sigma)=0$ if and only if $M$ is totally geodesic submanifold.
\end{theorem}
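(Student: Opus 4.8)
The plan is to establish the biconditional in two directions, where the reverse implication is immediate and all the content lies in the forward one. If $M$ is totally geodesic then $\sigma\equiv 0$, and since every summand in the definition (\ref{17}) of $Q(g,\sigma)$ carries a factor of $\sigma$, the Tachibana operator vanishes at once. For the forward implication I would proceed by the same substitution technique used in the preceding theorem for $Q(S,\sigma)$.

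First I would write out the hypothesis $Q(g,\sigma)=0$ explicitly. Specializing (\ref{17}) to $A=g$, $T=\sigma$ (so $k=2$) and inserting the wedge endomorphism (\ref{18}), the condition reads
\begin{eqnarray*}
&&\sigma\big(g(Y,U)X-g(X,U)Y,\,V\big)\\
&&+\,\sigma\big(U,\,g(Y,V)X-g(X,V)Y\big)=0
\end{eqnarray*}
for all $X,Y,U,V\in\Gamma(TM)$. Expanding by the bilinearity of $\sigma$ gives
\begin{eqnarray*}
&&g(Y,U)\sigma(X,V)-g(X,U)\sigma(Y,V)\\
&&+\,g(Y,V)\sigma(U,X)-g(X,V)\sigma(U,Y)=0.
\end{eqnarray*}
The decisive step is then to set $Y=V=\xi$. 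Because $M$ is invariant, equation (\ref{21}) gives $\sigma(\,\cdot\,,\xi)=0$, so the first, second, and fourth terms drop out, while the surviving third term carries the scalar $g(\xi,\xi)=\eta(\xi)=-1$ coming from (\ref{1}). The identity therefore collapses to $-\sigma(U,X)=0$, i.e.\ $\sigma\equiv 0$, which is exactly the statement that $M$ is totally geodesic.

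I do not expect a genuine obstacle, since the computation is routine once the right specialization is found. The only subtlety worth flagging is the choice of slots for $\xi$: inserting $\xi$ into a single position (say only $Y=\xi$) produces merely the weaker relation $\eta(U)\sigma(X,V)+\eta(V)\sigma(U,X)=0$, so one must place $\xi$ in both the $Y$ and $V$ arguments simultaneously in order to exploit the nonvanishing of $g(\xi,\xi)$ and thereby isolate $\sigma(U,X)$ on its own.
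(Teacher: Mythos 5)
Your proof is correct and follows essentially the same route as the paper: expand $Q(g,\sigma)$ via (\ref{17}) and (\ref{18}), substitute $\xi$ into two slots, and use $\sigma(\cdot,\xi)=0$ together with $g(\xi,\xi)=-1$ to isolate $\sigma$ (the paper sets $Y=U=\xi$ rather than your $Y=V=\xi$, an immaterial difference by symmetry). You also spell out the trivial converse direction, which the paper leaves implicit despite the ``if and only if'' in the statement.
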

\begin{proof}
$Q(g,\sigma)=0$ implies that
\begin{eqnarray*}
Q(g,\sigma)(U,V;X,Y)=\sigma((X\wedge_gY)U,V)+\sigma(U,(X\wedge_gY)V)=0,
\end{eqnarray*}
for all $X,Y,U,V\in\Gamma(TM)$. Substituting $Y=U$=$\xi$ in the last equality, we can conclude
$\sigma(X,V)=0$.
\end{proof}
\begin{theorem}
Let $M$ be an invariant submanifold of a lorentzian trans-Sasakian manifold $\widetilde{M}$. If $Q(S,\widetilde{\nabla}\cdot\sigma)=0$, then at least one of the following holds;\\
1.) $M$ is a totally geodesic,\\
2.) $\xi(\beta)=\alpha^2+\beta^2$,\\
3.) $\alpha^2-\beta^2=0$.
\end{theorem}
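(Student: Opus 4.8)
The plan is to expand the hypothesis $Q(S,\widetilde{\nabla}\sigma)=0$ through the definition (17), reading the $(0,3)$-tensor $T=\widetilde{\nabla}\sigma$ as $(\widetilde{\nabla}\sigma)(X_1,X_2,X_3)=(\widetilde{\nabla}_{X_1}\sigma)(X_2,X_3)$, and then to insert a substitution that collapses the three resulting wedge terms into one clean identity. The computational engine is the behaviour of $\widetilde{\nabla}\sigma$ when the structure vector field occupies a $\sigma$-slot. Using (13) together with $\sigma(\cdot,\xi)=0$ from (21), $\nabla_X\xi=-\alpha\phi X-\beta\phi^2 X$ from (4), and the invariance relations $\sigma(X,\phi Y)=\phi\sigma(X,Y)$ from (20) and $\phi^2 X=X+\eta(X)\xi$ from (1), I would first establish the key identity
\begin{eqnarray*}
(\widetilde{\nabla}_X\sigma)(Y,\xi)=(\alpha\phi+\beta)\sigma(X,Y),
\end{eqnarray*}
which is what carries both parameters $\alpha,\beta$ into the final expression.

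Next I would substitute $X=X_2=X_3=\xi$, leaving $X_1$ and $Y$ free. With this choice the first wedge term involves $(\widetilde{\nabla}_W\sigma)(\xi,\xi)$, which vanishes identically; the remaining two terms both reduce, via $(\xi\wedge_S Y)\xi=S(Y,\xi)\xi-S(\xi,\xi)Y$ and the key identity above, to the same multiple of $S(\xi,\xi)(\alpha\phi+\beta)\sigma(X_1,Y)$. Evaluating $S(\xi,\xi)$ from (9), with $\eta(\xi)=-1$ and $\phi\xi=0$, gives $S(\xi,\xi)=-2n(\alpha^2+\beta^2-\xi(\beta))$, so the hypothesis becomes
\begin{eqnarray*}
-4n(\alpha^2+\beta^2-\xi(\beta))(\alpha\phi+\beta)\sigma(X_1,Y)=0
\end{eqnarray*}
for all $X_1,Y\in\Gamma(TM)$.

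The conclusion then emerges by case analysis. If $\alpha^2+\beta^2-\xi(\beta)\neq 0$, that is, if alternative (2) fails, we are forced into $(\alpha\phi+\beta)\sigma(X_1,Y)=0$. To split this into the remaining alternatives I would apply $\phi$ and use that on an invariant submanifold $\phi$ preserves the normal bundle and satisfies $\phi^2=I$ there; this follows from (1) and (2) once one notes that $\phi$ is self-adjoint for $g$ and that $\eta$ vanishes on normal vectors (because $\xi$ is tangent). The result is the companion relation $\alpha\sigma(X_1,Y)+\beta\phi\sigma(X_1,Y)=0$, and eliminating $\phi\sigma(X_1,Y)$ between the two relations yields $(\alpha^2-\beta^2)\sigma(X_1,Y)=0$. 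Hence $\alpha^2-\beta^2=0$ or $\sigma\equiv 0$, and together with the discarded case we obtain that at least one of the three stated alternatives must hold.

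I expect the principal obstacle to be the bookkeeping needed to verify that the three wedge terms genuinely collapse to a single scalar multiple of $(\alpha\phi+\beta)\sigma(X_1,Y)$; in particular, the choice $X_2=X_3=\xi$ is what prevents an uncontrolled term of the form $(\widetilde{\nabla}_{X_1}\sigma)(X_2,Y)$ from surviving. The only genuinely structural step, as opposed to routine computation, is the observation that $\phi^2=I$ on the normal bundle: this is exactly what converts the bundle-valued identity $(\alpha\phi+\beta)\sigma=0$ into the scalar factorization $(\alpha^2-\beta^2)\sigma=0$, and thereby isolates alternative (3) from the totally geodesic case (1).
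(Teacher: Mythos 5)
Your proposal is correct and follows essentially the same route as the paper's own proof: the paper likewise sets $\xi$ into one wedge argument and both $\sigma$-slots of $\widetilde{\nabla}\sigma$ (its substitution $Y=Z=\xi$, then $V=\xi$, is your substitution up to the sign $(X\wedge_S\xi)=-(\xi\wedge_S X)$), computes inline exactly your key identity $(\widetilde{\nabla}_X\sigma)(Y,\xi)=\alpha\phi\sigma(X,Y)+\beta\sigma(X,Y)$, and then applies $\phi$ and eliminates $\phi\sigma$ to reach $-2n[\alpha^2+\beta^2-\xi(\beta)](\alpha^2-\beta^2)\sigma=0$. Your version is simply better organized, isolating the key identity as a lemma and justifying explicitly the step $\phi^2=I$ on the normal bundle, which the paper uses silently.
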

\begin{proof}
If $M$ is an invariant submanifold and $Q(S,\nabla\cdot\sigma)=0$, then we have
\begin{eqnarray*}
Q(S,\widetilde{\nabla}\cdot\sigma)(U,V,Z;X,Y)&=&(\widetilde{\nabla}_{(X\wedge_SY)U}\sigma)(V,Z)+(\widetilde{\nabla}_U\sigma)((X\wedge_SY)V,Z)\nonumber\\
&+&(\widetilde{\nabla}_U\sigma)(V,(X\wedge_SY)Z)=0,
\end{eqnarray*}
for all $X,Y,U,V,Z\in\Gamma(TM)$.
For $Y=Z=\xi$, this yields to
\begin{eqnarray}
(\widetilde{\nabla}_{(X\wedge_S\xi)U}\sigma)(V,\xi)&+&(\widetilde{\nabla}_U\sigma)((X\wedge_S\xi)V,\xi)\nonumber\\
&+&(\widetilde{\nabla}_U\sigma)(V,(X\wedge_S\xi)\xi)=0.\label{23}
\end{eqnarray}
Here, non-zero components of the first term have
\begin{eqnarray}
(\widetilde{\nabla}_{(X\wedge_S\xi)U}\sigma)(V,\xi)&=&-\sigma(\nabla_{(X\wedge_\xi)U}\xi,V)=-\sigma(\nabla_{S(\xi,U)X-S(X,U)\xi}\xi,V)\nonumber\\
&=&-S(\xi,U)\sigma(\nabla_X\xi,V)=S(\xi,U)\sigma(\alpha\phi{X}+\beta^2\phi{X},V)\nonumber\\
&=&S(\xi,U)[\alpha\phi\sigma(X,V)+\beta\sigma(X,V)].\label{24}
\end{eqnarray}
For the non-zero components of the second term, we have
\begin{eqnarray}
(\widetilde{\nabla}_{(X\wedge_S\xi)U}\sigma)(V,\xi)&=&-\sigma(\nabla_U,S(\xi,V)X-S(X,V)\xi)=-S(\xi,V)\sigma(\nabla_U\xi,X)\nonumber\\&=&S(\xi,V)\sigma(\alpha\phi{U}+\beta\phi^2U,X)\nonumber\\
&=&S(\xi,V)[\alpha\phi\sigma(U,X)+\beta\sigma(U,X)].\label{25}
\end{eqnarray}
Finally,
\begin{eqnarray}
(\widetilde{\nabla}_U\sigma)(V,(X\wedge_S\xi)\xi)&=&(\widetilde{\nabla}_U\sigma)(V,S(\xi,\xi)X-S(X,\xi)\xi)=(\widetilde{\nabla}_U\sigma)(S(\xi,\xi)X,V)\nonumber\\
&-&(\widetilde{\nabla}_U\sigma)(S(X,\xi)\xi,V)\nonumber\\
&=&(\widetilde{\nabla}_U\sigma)(S(\xi,\xi)X,V)-S(\xi,X)\sigma(\nabla_U\xi,V)\nonumber\\
&=&(\widetilde{\nabla}_U\sigma)(S(\xi,\xi)X,V)+S(X,\xi)[\alpha\phi\sigma(U,V)+\beta\sigma(U,V)].\label{26}
\end{eqnarray}
For $V=\xi$, (\ref{24}), (\ref{25}) and (\ref{26}) are put in (\ref{23}), we have
\begin{eqnarray*}
S(\xi,\xi)[\alpha\phi\sigma(X,U)&+&\beta\sigma(X,U)]+(\widetilde{\nabla}_U\sigma)(S(\xi,\xi)X,\xi)\nonumber\\
&=&S(\xi,\xi)[\alpha\phi\sigma(X,U)+\beta\sigma(X,U)]-\sigma(\nabla_U\xi,S(\xi,\xi)X)\nonumber\\
&=&0.
\end{eqnarray*}
Also taking into account (\ref{9}) and (\ref{20}), we obtain
\begin{eqnarray}
-2n(\alpha^2+\beta^2-\xi(\beta))[\alpha\phi\sigma(X,U)+\beta\sigma(X,U)]=0.\label{27}
\end{eqnarray}
Now  applying $\phi$ to (\ref{27}) and using (\ref{20}), we have
\begin{eqnarray}
-2n(\alpha^2+\beta^2-\xi(\beta))[\alpha\sigma(X,U)+\beta\phi\sigma(X,U)]=0.\label{28}
\end{eqnarray}
From (\ref{27}) and (\ref{28}), we can derive
\begin{eqnarray*}
-2n[\alpha^2+\beta^2-\xi(\beta)](\alpha^2-\beta^2)\sigma(U,V)=0.
\end{eqnarray*}
This proves our assertions.
\end{proof}
\begin{theorem}
	Let $M$ be an invariant submanifold of a lorentzian trans-Sasakian manifold $\widetilde{M}$. If $Q(g,\widetilde{\nabla}\cdot\sigma)=0$, then $M$ is either totally geodesic submanifold or $\alpha^2-\beta^2=0$.
\end{theorem}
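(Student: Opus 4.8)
The plan is to mirror the structure of the preceding theorem, the $Q(S,\widetilde{\nabla}\cdot\sigma)=0$ case, replacing the Ricci tensor $S$ by the metric $g$ throughout the Tachibana operator. First I would expand the hypothesis $Q(g,\widetilde{\nabla}\cdot\sigma)=0$ by means of (\ref{17}) and (\ref{18}). Regarding $\widetilde{\nabla}\cdot\sigma$ as the $(0,3)$-tensor $T(U,V,Z)=(\widetilde{\nabla}_U\sigma)(V,Z)$, this reads
\begin{eqnarray*}
(\widetilde{\nabla}_{(X\wedge_g Y)U}\sigma)(V,Z)+(\widetilde{\nabla}_U\sigma)((X\wedge_g Y)V,Z)+(\widetilde{\nabla}_U\sigma)(V,(X\wedge_g Y)Z)=0,
\end{eqnarray*}
and I would then specialize $Y=Z=\xi$, exactly as in the proof just given.

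The key computational input is the identity $(\widetilde{\nabla}_W\sigma)(Y,\xi)=\alpha\phi\sigma(Y,W)+\beta\sigma(Y,W)$, obtained from (\ref{13}) by using $\sigma(\cdot,\xi)=0$ from (\ref{21}), substituting $\nabla_W\xi=-\alpha\phi W-\beta\phi^2 W$ from (\ref{4}), and applying $\sigma(\phi W,Y)=\phi\sigma(W,Y)$ together with $\sigma(\phi^2 W,Y)=\sigma(W,Y)$ from (\ref{20}) and (\ref{21}). With this identity each of the three terms reduces to a multiple of $\alpha\phi\sigma+\beta\sigma$. The decisive structural point, and the place where the $g$-case diverges from the $S$-case, is the last slot: here $(X\wedge_g\xi)\xi=g(\xi,\xi)X-g(X,\xi)\xi=-\phi^2 X$, because $g(\xi,\xi)=\eta(\xi)=-1$ is a nonzero constant. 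In the $S$-case the analogous coefficient was $S(\xi,\xi)=-2n(\alpha^2+\beta^2-\xi(\beta))$, which may vanish and thereby produced the extra alternative $\xi(\beta)=\alpha^2+\beta^2$; since $g(\xi,\xi)=-1$ never vanishes, that branch simply disappears, which is exactly why the present conclusion has only two cases.

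Next I would set $V=\xi$ to collapse the surviving terms. The first term vanishes because it reduces to $(\widetilde{\nabla}_{\bullet}\sigma)(\xi,\xi)=0$, while the second and third each equal $-[\alpha\phi\sigma(X,U)+\beta\sigma(X,U)]$, so that altogether
\begin{eqnarray*}
\alpha\phi\sigma(X,U)+\beta\sigma(X,U)=0
\end{eqnarray*}
for all $X,U\in\Gamma(TM)$. Applying $\phi$ to this relation and using $\phi^2\sigma(X,U)=\sigma(X,U)$, which holds because $\sigma(X,U)$ is normal and hence $\eta(\sigma(X,U))=0$, yields the companion relation $\alpha\sigma(X,U)+\beta\phi\sigma(X,U)=0$. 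Eliminating the $\phi\sigma$ term between the two, by multiplying the first relation by $\beta$, the second by $\alpha$, and subtracting, I obtain
\begin{eqnarray*}
(\alpha^2-\beta^2)\sigma(X,U)=0.
\end{eqnarray*}
Therefore either $\sigma\equiv 0$, so that $M$ is totally geodesic, or $\alpha^2-\beta^2=0$, as claimed.

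The main obstacle is bookkeeping rather than conceptual: correctly handling the three Tachibana terms after the substitution $Y=Z=\xi$, in particular tracking the $\phi^2 X$ produced by $(X\wedge_g\xi)\xi$ and decomposing $\phi^2 X=X+\eta(X)\xi$ without discarding the genuine covariant-derivative contribution. Once $V=\xi$ is imposed the $\eta(X)$-pieces drop out and the elimination is routine; the one point that must be justified with care is that $\phi^2$ acts as the identity on the normal vector $\sigma(X,U)$, which follows from $g(\sigma(X,U),\xi)=0$.
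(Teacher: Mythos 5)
Your proposal is correct and follows essentially the same route as the paper: specialize the Tachibana identity at $\xi$, use $\sigma(\cdot,\xi)=0$ and $\nabla_U\xi=-\alpha\phi U-\beta\phi^2U$ to reduce everything to $\alpha\phi\sigma(X,U)+\beta\sigma(X,U)=0$, apply $\phi$ to get the companion relation, and eliminate to obtain $(\alpha^2-\beta^2)\sigma(X,U)=0$. The only differences are cosmetic: you keep all three Tachibana terms (the paper's display drops the $(\widetilde{\nabla}_{(X\wedge_gY)U}\sigma)(V,Z)$ term, which in any case dies when $V=\xi$), you substitute $Y=Z=\xi$ then $V=\xi$ rather than $Y=V=\xi$ then $Z=\xi$, and your version silently corrects a missing $\phi$ in the paper's equation (\ref{29}).
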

\begin{proof}
	\begin{eqnarray*}
		Q(g,\widetilde{\nabla}\cdot\sigma)(U,V,Z;X,Y)=(\widetilde{\nabla}_U\sigma)((X\wedge_gY)V,Z)+(\widetilde{\nabla}_U\sigma)(V,(X\wedge_gY)Z)=0,
	\end{eqnarray*}
	for all $X,Y,U,V,Z\in\Gamma(TM)$. Taking $Y=V=\xi$ in last equality and by means of (\ref{13}) and (\ref{18}), we obtain
	\begin{eqnarray*}
		-(\widetilde{\nabla}_U\sigma)(X,Z)+\eta(X)\sigma(\nabla_U\eta(X)\xi,Z)-\sigma(\nabla_U\xi,\eta(Z)X)=0,
\end{eqnarray*}
	or
\begin{eqnarray*}	-(\widetilde{\nabla}_U\sigma)(X,Z)-\eta(X)\sigma(\alpha\phi{U}+\beta\phi^2U,Z)+\eta(Z)\sigma(\alpha\phi{U}+\beta\phi^2U,X)=0,
\end{eqnarray*}
which implies for $Z=\xi$
\begin{eqnarray}
\sigma(\nabla_U\xi,X)-\sigma(\alpha\phi{U}+\beta\phi^2U,X)&=&-2(\alpha\sigma(U,X)+\beta\sigma(U,X))\nonumber\\
&=&0.\label{29}
\end{eqnarray}
Applying $\phi$ to (\ref{29}) and we consider (\ref{20}), we have
\begin{eqnarray}
\alpha\phi\sigma(U,X)+\beta\sigma(U,X)=0.\label{30}
\end{eqnarray}
From (\ref{29}) and (\ref{30}) we conclude
\begin{eqnarray*}
(\alpha^2-\beta^2)\sigma(U,X)=0.
\end{eqnarray*}
This completes the proof.
\end{proof}
\begin{theorem}
 let $M$ be an invariant submanifold of lorentzian trans-Sasakian manifold $\widetilde{M}(\phi,\xi,\eta,g)$. If $Q(g,\widetilde{R}\cdot\sigma)=0$, then at least one of the following holds:\\
 1. $M$ is a totally geodesic submanifold,\\
 2. $\eta(\nabla(\beta-\alpha))=(\beta-\alpha)^2$\\
 3. $\eta(\nabla(\beta+\alpha))=(\beta+\alpha)^2$
\end{theorem}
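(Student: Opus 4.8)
The plan is to expand the hypothesis $Q(g,\widetilde{R}\cdot\sigma)=0$ through the defining relations (\ref{17}) and (\ref{18}), where $\widetilde{R}\cdot\sigma$ denotes the curvature operator acting as a derivation on the second fundamental form, namely $(\widetilde{R}(X,Y)\cdot\sigma)(U,V)=R^{\perp}(X,Y)\sigma(U,V)-\sigma(\widetilde{R}(X,Y)U,V)-\sigma(U,\widetilde{R}(X,Y)V)$. As in the proofs of the preceding theorems, the whole strategy is to push $\xi$ into the right slots so that the curvature reduces via (\ref{8}). First I would drive the two curvature directions and the base vector down to $\xi$, producing the combination $\widetilde{R}(\xi,X)\xi$; simultaneously the normal-curvature piece and the $\sigma(\xi,\cdot)$ contributions should be eliminated using (\ref{21}), which gives $\sigma(\xi,\cdot)=0$. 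The surviving term is then $\sigma(\widetilde{R}(\xi,X)\xi,U)$.

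Next I would feed in (\ref{8}), writing $\widetilde{R}(\xi,X)\xi=\lambda\,\phi^{2}X+\mu\,\phi X$ with the abbreviations $\lambda=\alpha^{2}+\beta^{2}-\xi(\beta)$ and $\mu=2\alpha\beta-\xi(\alpha)$. Using $\sigma(\phi^{2}X,U)=\sigma(X,U)$ (since $\phi^{2}X=X+\eta(X)\xi$ by (\ref{1}) and $\sigma(\xi,U)=0$) together with $\sigma(\phi X,U)=\phi\sigma(X,U)$ from (\ref{20}), the relation collapses, up to a nonzero numerical factor, to
\begin{eqnarray*}
\lambda\,\sigma(X,U)+\mu\,\phi\sigma(X,U)=0 .
\end{eqnarray*}
Applying $\phi$ to this identity and invoking (\ref{20}) once more, noting that $\phi^{2}\sigma(X,U)=\sigma(X,U)$ because $\sigma(X,U)\in\Gamma(T^{\perp}M)$ forces $\eta(\sigma(X,U))=0$, yields the companion relation $\mu\,\sigma(X,U)+\lambda\,\phi\sigma(X,U)=0$. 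Eliminating $\phi\sigma(X,U)$ between the two equations gives $(\lambda^{2}-\mu^{2})\sigma(X,U)=0$, i.e. $(\lambda-\mu)(\lambda+\mu)\sigma(X,U)=0$.

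To finish, I would rewrite the two scalar factors algebraically: $\lambda-\mu=(\beta-\alpha)^{2}-\xi(\beta-\alpha)$ and $\lambda+\mu=(\beta+\alpha)^{2}-\xi(\beta+\alpha)$, and observe that for any smooth function $f$ one has $\eta(\nabla f)=g(\nabla f,\xi)=\xi(f)$. Hence either $\sigma\equiv0$, so $M$ is totally geodesic, or $\lambda-\mu=0$, which is $\eta(\nabla(\beta-\alpha))=(\beta-\alpha)^{2}$, or $\lambda+\mu=0$, which is $\eta(\nabla(\beta+\alpha))=(\beta+\alpha)^{2}$; these are exactly the three alternatives claimed.

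The main obstacle is the bookkeeping of the multi-slot Tachibana expansion: one must select the substitution of $\xi$ into the several arguments so that precisely the $\widetilde{R}(\xi,X)\xi$ term of (\ref{8}) is isolated, while verifying that the normal-curvature term $R^{\perp}(\cdot,\cdot)\sigma$ and the various cross terms either cancel or die through $\sigma(\xi,\cdot)=0$. The subsequent $\phi$-trick and the recognition of the identities $\lambda\mp\mu=(\beta\mp\alpha)^{2}-\xi(\beta\mp\alpha)$ are then routine.
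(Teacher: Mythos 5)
Your proposal is correct and takes essentially the same route as the paper: substitute $\xi$ into the appropriate slots of the Tachibana expansion, kill the $R^{\bot}$ and $\sigma(\xi,\cdot)$ contributions via (\ref{21}), reduce the surviving curvature term through the formula for $\widetilde{R}(\xi,Y)\xi$, and then apply the $\phi$-trick to obtain $(\lambda^{2}-\mu^{2})\sigma=0$ with $\lambda=\alpha^{2}+\beta^{2}-\xi(\beta)$ and $\mu=2\alpha\beta-\xi(\alpha)$. If anything, your bookkeeping is more consistent than the paper's, whose intermediate equations (\ref{32}) and (\ref{33}) momentarily drop the $\xi(\alpha)$ term before it reappears in the final display.
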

\begin{proof}
 $Q(g,\widetilde{R}\cdot\sigma)=0$ leads to
 \begin{eqnarray*}
 (\widetilde{R}(X,Y)\cdot\sigma)((U\wedge_gV)Z,W)+(\widetilde{R}(X,Y)\cdot\sigma)(Z,(U\wedge_gV)Z)=0,
 \end{eqnarray*}
for all $X,Y,Z,U,V,W\in\Gamma(T\widetilde{M})$. For $Z=U=W=\xi$, this equality implies
\begin{eqnarray*}
&&(\widetilde{R}(X,Y)\cdot\sigma)(\eta(V)\xi+V,\xi)=R^\bot(X,Y)\sigma(\eta(V)\xi,\xi)-\sigma(R(X,Y)\eta(V)\xi,\xi)\nonumber\\
&-&\sigma(\eta(V)\xi,R(X,Y)\xi)+R^\bot(X,Y)\sigma(V,\xi)-\sigma(R(X,Y)V,\xi)-\sigma(V,R(X,Y)\xi)\nonumber\\
&=&0.
\end{eqnarray*}
Taking into account of (\ref{6}), (\ref{19}) and (\ref{20}),
we can infer
\begin{eqnarray}
&&\sigma(V,(\alpha^2+\beta^2)[\eta(Y)X-\eta(X)Y]+2\alpha\beta[\eta(Y)\phi{X}-\eta(X)\phi{Y}])\nonumber\\
&+&\sigma(V,Y(\alpha)\phi{X}-X(\alpha)\phi{Y}+Y(\beta)\phi^2X-X(\beta)\phi^2Y)=0.\label{31}
\end{eqnarray}
Also, if $\xi$ is taken instead of $X$ in (\ref{31})and by means of (\ref{1}) and (\ref{21}), we reach at
\begin{eqnarray}
[\alpha^2+\beta^2-\xi(\beta)]\sigma(V,Y)+2\alpha\beta\phi\sigma(V,Y)=0.\label{32}
\end{eqnarray}
Applying $\phi$ to (\ref{32}) and taking into accont that (\ref{20}), we have
\begin{eqnarray}
[\alpha^2+\beta^2-\xi(\beta)]\phi\sigma(V,Y)+2\alpha\beta\sigma(V,Y)=0.\label{33}
\end{eqnarray}
by combining (\ref{32}) and (\ref{33}), we observe
\begin{eqnarray*}
 \{[\alpha^2+\beta^2-\xi(\beta)]^2-[2\alpha\beta-\xi(\alpha)]^2\}\sigma(Y,V)=0.
\end{eqnarray*}
This completes the proof.
\end{proof}

 \begin{theorem}
 Let $M$ be an invariant submanifold of a lorentzian trans-Sasakian manifold $\widetilde{M}$. If $Q(S,\widetilde{R}\cdot\sigma)=0$, then at least one of the following holds;\\
 	1.) $M$ is a totally geodesic,\\
 	2.) $\xi(\beta)=\alpha^2+\beta^2$\\
 	3.) $\xi(\beta-\alpha)=(\alpha-\beta)^2$.\\
 	4.) $\xi(\beta+\alpha)=(\alpha+\beta)^2$.
 \end{theorem}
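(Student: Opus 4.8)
The plan is to imitate the argument already used for $Q(g,\widetilde{R}\cdot\sigma)=0$, but to keep track of the extra Ricci factors that the endomorphism $\wedge_S$ produces. First I would write the hypothesis $Q(S,\widetilde{R}\cdot\sigma)=0$ out from the definitions (\ref{17})--(\ref{18}) as
\begin{eqnarray*}
(\widetilde{R}(X,Y)\cdot\sigma)((U\wedge_SV)Z,W)+(\widetilde{R}(X,Y)\cdot\sigma)(Z,(U\wedge_SV)W)=0,
\end{eqnarray*}
where the curvature acts as a derivation on $\sigma$,
\begin{eqnarray*}
(\widetilde{R}(X,Y)\cdot\sigma)(A,B)=R^\bot(X,Y)\sigma(A,B)-\sigma(R(X,Y)A,B)-\sigma(A,R(X,Y)B).
\end{eqnarray*}
I would then substitute $Z=U=W=\xi$, so that both Tachibana slots feed in the vector $(\xi\wedge_SV)\xi=S(V,\xi)\xi-S(\xi,\xi)V$.

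The decisive simplification is to discard every term that carries a factor $\sigma(\,\cdot\,,\xi)$, using (\ref{21}): since $R(X,Y)\xi$ is tangent by (\ref{19}), the whole $S(V,\xi)\xi$-contribution drops out, and so do the $R^\bot$-terms and the ``first-slot'' curvature terms. After invoking the symmetry of $\sigma$, the only survivor is
\begin{eqnarray*}
2S(\xi,\xi)\,\sigma(R(X,Y)\xi,V)=0.
\end{eqnarray*}
Evaluating (\ref{9}) at $X=\xi$ gives $S(\xi,\xi)=-2n[\alpha^2+\beta^2-\xi(\beta)]$, which is exactly the scalar factor responsible for case (2).

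Next I would set $X=\xi$ and expand $R(\xi,Y)\xi$ by (\ref{6}), using $\phi\xi=0$, $\phi^2\xi=0$ together with (\ref{20})--(\ref{21}), to reduce $\sigma(R(\xi,Y)\xi,V)$ to $[\alpha^2+\beta^2-\xi(\beta)]\sigma(Y,V)+[2\alpha\beta-\xi(\alpha)]\phi\sigma(Y,V)$. Writing $a=\alpha^2+\beta^2-\xi(\beta)$ and $b=2\alpha\beta-\xi(\alpha)$, the surviving identity reads $a\{a\,\sigma(Y,V)+b\,\phi\sigma(Y,V)\}=0$. Applying $\phi$ once more and using that $\phi^2$ acts as the identity on the normal vector $\sigma(Y,V)$ (because $\eta(\sigma(Y,V))=0$), I obtain the companion relation $a\{a\,\phi\sigma(Y,V)+b\,\sigma(Y,V)\}=0$; eliminating $\phi\sigma(Y,V)$ between the two yields $a(a^2-b^2)\sigma(Y,V)=0$, that is
\begin{eqnarray*}
[\alpha^2+\beta^2-\xi(\beta)]\{[\alpha^2+\beta^2-\xi(\beta)]^2-[2\alpha\beta-\xi(\alpha)]^2\}\sigma(Y,V)=0.
\end{eqnarray*}
Factoring the bracket as a difference of two squares, $a=b$ gives $(\alpha-\beta)^2=\xi(\beta-\alpha)$ (case 3) and $a=-b$ gives $(\alpha+\beta)^2=\xi(\beta+\alpha)$ (case 4), while $a=0$ is case (2) and $\sigma=0$ is case (1).

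I expect the main obstacle to be the bookkeeping in the simplification step: one must confirm that the $\wedge_S$-endomorphism really collapses to the single term $2S(\xi,\xi)\sigma(R(X,Y)\xi,V)$, i.e.\ that every $\sigma(\,\cdot\,,\xi)$-term and every $R^\bot$-term genuinely vanishes and that the $S(V,\xi)\xi$-piece contributes nothing. Once that reduction is secured, the remaining $\phi$-trick and the difference-of-squares factorization are the same routine manipulations already performed for $Q(g,\widetilde{R}\cdot\sigma)=0$.
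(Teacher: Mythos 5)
Your proposal is correct and follows essentially the same route as the paper: both reduce the hypothesis by substituting $\xi$ into the Tachibana slots until the identity collapses to $S(\xi,\xi)\,\sigma(R(\xi,Y)\xi,V)=0$, then use $S(\xi,\xi)=-2n[\alpha^2+\beta^2-\xi(\beta)]$ together with (\ref{8}), the $\phi$-trick via (\ref{20}), and the difference-of-squares factorization $a(a^2-b^2)\sigma=0$. The only difference is cosmetic — you set $Z=U=W=\xi$ first and $X=\xi$ afterwards, while the paper sets $X=U=W=\xi$ and then $Z=\xi$ in its intermediate equation (\ref{34}) — which merely makes your bookkeeping collapse a step earlier.
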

\begin{proof}
$Q(S,\widetilde{R}\cdot\sigma)=0$ has the form
\begin{eqnarray*}
Q(S,\widetilde{R}\cdot\sigma)(U,V,Z,W;X,Y)&=&(\widetilde{R}(X,Y)\cdot\sigma)((U\wedge_SV)Z,W)\nonumber\\
&+&(\widetilde{R}(X,Y)\cdot\sigma)(Z,(U\wedge_SV)W)=0.
\end{eqnarray*}
For $X=U=W=\xi$, this yields to
\begin{eqnarray*}
(\widetilde{R}(\xi,Y)\cdot\sigma)(S(V,Z)\xi-S(\xi,Z)V,\xi)&+&(\widetilde{R}(\xi,Y)\cdot\sigma)(Z,S(V,\xi)\xi-S(\xi,\xi)V)\nonumber\\&=&0.
\end{eqnarray*}
Non-zero components of this expansion give us
\begin{eqnarray}
S(\xi,Z)\sigma(V,R(\xi,Y)\xi)&-&S(V,\xi)\sigma(Z,R(\xi,Y)\xi)-R^\bot(\xi,Y)S(\xi,\xi)\sigma(V,Z)\nonumber\\
&&+S(\xi,\xi)\sigma(R(\xi,Y)V,Z)+S(\xi,\xi)\sigma(R(\xi,Y)Z,V)\nonumber\\
&=&0.\label{34}
\end{eqnarray}
Taking $Z=\xi$ in (\ref{34}) and by using (\ref{8}) and (\ref{21}), we verify
\begin{eqnarray*}
S(\xi,\xi)\sigma(R(\xi,Y)\xi,V)&=&\left\lbrace-[2n(\alpha^2+\beta^2)-\xi(\beta)]+(2n-1)\xi(\beta)\right\rbrace\nonumber\\
&&\otimes\sigma(V,(\alpha^2+\beta^2-\xi(\beta))\phi^2Y+(2\alpha\beta-\xi(\alpha))\phi{Y})=0.
\end{eqnarray*}
This is equivalent to
\begin{eqnarray}
2n(\alpha^2+\beta^2-\xi(\beta))\left[(\alpha^2+\beta^2-\xi(\beta))\sigma(V,Y)+(2\alpha\beta-\xi(\alpha))\phi\sigma(V,Y)\right]\nonumber\\
&=&0.\label{35}
\end{eqnarray}
If $Y$ is taken instead of $\phi{Y}$ in (\ref{35}) and using (\ref{20}), we obtain
\begin{eqnarray*}
2n(\alpha^2+\beta^2-\xi(\beta))\left[(\alpha^2+\beta^2-\xi(\beta))\phi\sigma(V,Y)+(2\alpha\beta-\xi(\alpha))\sigma(V,Y)\right]=0.
\end{eqnarray*}
From the last two-equalities, we conclude that
\begin{eqnarray*}
2n(\alpha^2+\beta^2-\xi(\beta))[(\alpha^2+\beta^2-\xi(\beta))^2-(2\alpha\beta-\xi(\alpha))^2]\sigma(Y,V)=0,
\end{eqnarray*}
which proves our assertion.
\end{proof}
\begin{theorem}
Let $M$ be an invariant submanifold of a lorentzian trans-Sasakian manifold $\widetilde{M}$. If $Q(g,\mathcal{C}\cdot\sigma)=0$, then at least one of the following holds;\\
1. $M$ is totally geodesic submanifold,\\
2. The scalar curvture $\tau$ of $\widetilde{M}$ satisfies
$\tau=2n(2n+1)[(\alpha\pm\beta)^2-\xi(\alpha\pm\beta)]$
\end{theorem}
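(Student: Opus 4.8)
The plan is to follow the template of the $Q(g,\widetilde{R}\cdot\sigma)=0$ and $Q(S,\widetilde{R}\cdot\sigma)=0$ theorems above, observing that by (\ref{16}) the concircular tensor $\mathcal{C}$ differs from $R$ only by a multiple of $g\wedge g$, so that this single correction will enter the final identity through the scalar curvature $\tau$. Letting $\mathcal{C}(X,Y)$ act as a derivation on the normal--bundle valued form $\sigma$,
\begin{eqnarray*}
(\mathcal{C}(X,Y)\cdot\sigma)(U,V)=R^\bot(X,Y)\sigma(U,V)-\sigma(\mathcal{C}(X,Y)U,V)-\sigma(U,\mathcal{C}(X,Y)V),
\end{eqnarray*}
the hypothesis $Q(g,\mathcal{C}\cdot\sigma)=0$ becomes, via (\ref{17}) and (\ref{18}),
\begin{eqnarray*}
(\mathcal{C}(X,Y)\cdot\sigma)((U\wedge_gV)Z,W)+(\mathcal{C}(X,Y)\cdot\sigma)(Z,(U\wedge_gV)W)=0
\end{eqnarray*}
for all $X,Y,Z,U,V,W\in\Gamma(TM)$.

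First I would specialise the arguments exactly as in the curvature case. Putting $Z=U=W=\xi$ and using $(\xi\wedge_gV)\xi=g(V,\xi)\xi-g(\xi,\xi)V=V+\eta(V)\xi$ together with $g(\xi,\xi)=-1$, the two summands coincide by symmetry of $\sigma$ and every surviving $\sigma(\cdot,\xi)$ is killed by (\ref{21}), leaving $-2\sigma(V,\mathcal{C}(X,Y)\xi)=0$. I would then set $X=\xi$ and evaluate $\mathcal{C}(\xi,Y)\xi$ from (\ref{16}), using $R(\xi,Y)\xi=\widetilde{R}(\xi,Y)\xi$ from (\ref{19}) and the closed form (\ref{8}), together with $\phi^2Y=Y+\eta(Y)\xi$, to get
\begin{eqnarray*}
\mathcal{C}(\xi,Y)\xi=\Big[\alpha^2+\beta^2-\xi(\beta)-\dfrac{\tau}{2n(2n+1)}\Big](Y+\eta(Y)\xi)+[2\alpha\beta-\xi(\alpha)]\phi{Y}.
\end{eqnarray*}
Discarding the $\eta(Y)\xi$ part by (\ref{21}) and using $\sigma(V,\phi{Y})=\phi\sigma(V,Y)$ from (\ref{20}) then gives
\begin{eqnarray*}
\Big[\alpha^2+\beta^2-\xi(\beta)-\dfrac{\tau}{2n(2n+1)}\Big]\sigma(V,Y)+[2\alpha\beta-\xi(\alpha)]\phi\sigma(V,Y)=0.
\end{eqnarray*}

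To close the argument I would apply $\phi$ to this relation. Since $\sigma(V,Y)$ is normal we have $\eta(\sigma(V,Y))=0$, hence $\phi^2\sigma(V,Y)=\sigma(V,Y)$, and (\ref{20}) produces a second copy of the relation with $\sigma$ and $\phi\sigma$ interchanged. Eliminating $\phi\sigma(V,Y)$ between the two yields
\begin{eqnarray*}
\Big\{\big[\alpha^2+\beta^2-\xi(\beta)-\dfrac{\tau}{2n(2n+1)}\big]^2-[2\alpha\beta-\xi(\alpha)]^2\Big\}\sigma(V,Y)=0.
\end{eqnarray*}
Hence either $\sigma\equiv0$, so that $M$ is totally geodesic, or the scalar factor vanishes; factoring the difference of squares gives $\alpha^2+\beta^2-\xi(\beta)-\frac{\tau}{2n(2n+1)}=\pm[2\alpha\beta-\xi(\alpha)]$, and completing the square on $\alpha^2+\beta^2\mp2\alpha\beta$ rearranges this into $\tau=2n(2n+1)[(\alpha\pm\beta)^2-\xi(\alpha\pm\beta)]$, the second alternative.

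I expect the only real obstacle to be computing $\mathcal{C}(\xi,Y)\xi$ exactly, in particular keeping the coefficients $\alpha^2+\beta^2-\xi(\beta)$ and $2\alpha\beta-\xi(\alpha)$ with the correct signs as the curvature formula passes through the concircular correction and the contractions $\phi\xi=0$, $\phi^2\xi=0$, $\eta(\xi)=-1$; a sign slip there propagates directly into the scalar--curvature condition. The genuinely new feature compared with the $Q(g,\widetilde{R}\cdot\sigma)=0$ theorem is merely the additive term $\tfrac{\tau}{2n(2n+1)}$ in the first coefficient, and a little care is needed at the final step to pair the two signs $\pm$ with the two branches of the factorisation (reading the $-$ branch with argument $\beta-\alpha$ where convenient).
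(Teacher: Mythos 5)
Your proposal is correct and follows essentially the same route as the paper's proof: specialise the Tachibana identity at $\xi$ to reduce to $\sigma(V,\mathcal{C}(\xi,Y)\xi)=0$, compute $\mathcal{C}(\xi,Y)\xi$ from (\ref{8}) and (\ref{16}), then use (\ref{20})--(\ref{21}) and a $\phi$-application to eliminate $\phi\sigma$ and obtain the same difference-of-squares relation $\bigl\{[\alpha^2+\beta^2-\xi(\beta)-\tfrac{\tau}{2n(2n+1)}]^2-[2\alpha\beta-\xi(\alpha)]^2\bigr\}\sigma(V,Y)=0$. Your only deviations are cosmetic (specialising $Z=U=W=\xi$ at once rather than in two stages, and applying $\phi$ instead of substituting $Y\to\phi Y$), and your explicit handling of the final factorisation and the sign pairing in the $\pm$ branches is actually more careful than the paper, which stops at the difference of squares.
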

\begin{proof}
$Q(g,\mathcal{C}\cdot\sigma)=0$ is of
\begin{eqnarray*}
(\mathcal{C}(X,Y)\cdot\sigma)((U\wedge_gV)Z,W)+(\mathcal{C}(X,Y)\cdot\sigma)(Z,(U\wedge_gV)Z)=0,
\end{eqnarray*}
for all $X,Y,Z,U,V,W\in\Gamma(T\widetilde{M})$.\\
Here, taking $Z=U=W=\xi$, we have
\begin{eqnarray*}
&&(\mathcal{C}(\xi,Y)\cdot\sigma)(g(V,Z)\xi-\eta(Z)V,\xi)+(\mathcal{C}(\xi,Y)\cdot\sigma)(Z,\eta(V)\xi+V)\nonumber\\
&=&(\mathcal{C}(\xi,Y)\cdot\sigma)(g(V,Z)\xi,\xi)-(\mathcal{C}(\xi,Y)\cdot\sigma)(\eta(Z)V,\xi)\nonumber\\
&+&(\mathcal{C}(\xi,Y)\cdot\sigma)(Z,\eta(V)\xi)+(\mathcal{C}(\xi,Y)\cdot\sigma)(g(V,Z)\xi,\xi)(Z,V)=0.
\end{eqnarray*}
As non-zero components in these expansions, one can easily to see
\begin{eqnarray}
\eta(Z)\sigma(V,\mathcal{C}(\xi,Y)\xi)&+&R^\bot(\xi,Y)\sigma(Z,V)-\sigma(\mathcal{C}(\xi,Y)Z,V)\nonumber\\
&-&\sigma(Z,\mathcal{C}(\xi,Y)V)=0.\label{36}
\end{eqnarray}
In (\ref{36}), setting $Z=\xi$ and consider (\ref{6}), (\ref{16}), (\ref{21}) we reach at
\begin{eqnarray}
\sigma(V,\mathcal{C}(\xi,Y)\xi)&=&\left( \alpha^2+\beta^2-\xi(\beta)-\frac{\tau}{2n(2n+1)}\right)\sigma(V,Y)\nonumber\\
&+&[2\alpha\beta-\xi(\alpha)]\phi\sigma(V,Y)=0.\label{37}
\end{eqnarray}
Replacing $\phi{Y}$ by $Y$ in the last inequality and making the necessary revisions, we get
\begin{eqnarray}
\left(\alpha^2+\beta^2-\xi(\beta)-\frac{\tau}{2n(2n+1)}\right)\phi\sigma(V,Y)&+&[2\alpha\beta-\xi(\alpha)]\sigma(V,Y)\nonumber\\
&=&0.\label{38}
\end{eqnarray}
Thus (\ref{37})  and (\ref{38}) give us
\begin{eqnarray*}
\left(\left[\alpha^2+\beta^2-\xi(\beta)-\frac{\tau}{2n(2n+1)} \right]^2-\left[2\alpha\beta-\xi(\alpha)\right]^2\right)\sigma(V,Y)=0,
\end{eqnarray*}
which proves our assertions.
\end{proof}
\begin{theorem}
Let $M$ be an invariant submanifold of a lorentzian trans-Sasakian manifold $\widetilde{M}$. If $Q(S,\mathcal{C}\cdot\sigma)=0$, then at least one of the following holds;\\
1. $M$ is totally geodesic submanifold,\\
2.The scalar curvture $\tau$ of $\widetilde{M}$ satisfies $\tau=\pm2n(2n+1)(2\alpha\beta-\xi(\alpha))$ or
$\tau=2n(2n+1)[(\alpha\pm\beta)^2-\xi(\alpha\pm\beta)]$.
\end{theorem}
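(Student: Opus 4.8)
The plan is to mirror the computation of the two preceding theorems, in particular the treatment of $Q(S,\widetilde R\cdot\sigma)=0$, replacing the curvature tensor $\widetilde R$ by the concircular tensor $\mathcal C$ of (\ref{16}). First I would write out $Q(S,\mathcal C\cdot\sigma)=0$ from the definition (\ref{17}),
\[
(\mathcal C(X,Y)\cdot\sigma)((U\wedge_SV)Z,W)+(\mathcal C(X,Y)\cdot\sigma)(Z,(U\wedge_SV)W)=0,
\]
and specialize to $X=U=W=\xi$. Using (\ref{18}) one has $(\xi\wedge_SV)Z=S(V,Z)\xi-S(\xi,Z)V$ and $(\xi\wedge_SV)\xi=S(V,\xi)\xi-S(\xi,\xi)V$, so that after expanding the tensor derivation $(\mathcal C(\xi,Y)\cdot\sigma)(A,B)=R^\bot(\xi,Y)\sigma(A,B)-\sigma(\mathcal C(\xi,Y)A,B)-\sigma(A,\mathcal C(\xi,Y)B)$ and discarding every term that carries a factor $\sigma(\cdot,\xi)$ by (\ref{21}), the expression reduces to a sum of pieces each weighted by $S(\cdot,\xi)$.

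Next I would set $Z=\xi$. The same cancellation by (\ref{21}) collapses the whole expression to the single scalar–operator identity
\[
S(\xi,\xi)\,\sigma\bigl(V,\mathcal C(\xi,Y)\xi\bigr)=0 .
\]
From (\ref{9}) with $X=\xi$ (recall $\eta(\xi)=-1$, $\phi\xi=0$) one computes $S(\xi,\xi)=-2n(\alpha^2+\beta^2-\xi(\beta))$, while combining (\ref{16}) and (\ref{8}) together with $\phi^2Y=Y+\eta(Y)\xi$ gives
\[
\mathcal C(\xi,Y)\xi=\Bigl[\alpha^2+\beta^2-\xi(\beta)-\tfrac{\tau}{2n(2n+1)}\Bigr](Y+\eta(Y)\xi)+(2\alpha\beta-\xi(\alpha))\phi Y .
\]
Feeding this into the identity and using $\sigma(V,\phi Y)=\phi\sigma(V,Y)$ from (\ref{20}) reproduces exactly the bracket of (\ref{37}), now carried by the factor $S(\xi,\xi)$.

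As in the earlier proofs, I would then replace $Y$ by $\phi Y$, apply (\ref{20}) once more to obtain the companion relation with $\sigma(V,Y)$ and $\phi\sigma(V,Y)$ interchanged, and eliminate the cross term. This yields the factored equation
\[
2n(\alpha^2+\beta^2-\xi(\beta))\left(\Bigl[\alpha^2+\beta^2-\xi(\beta)-\tfrac{\tau}{2n(2n+1)}\Bigr]^2-(2\alpha\beta-\xi(\alpha))^2\right)\sigma(V,Y)=0 .
\]
Reading off the factors gives the alternatives: $\sigma\equiv0$, i.e.\ $M$ is totally geodesic (conclusion 1); the difference–of–squares bracket vanishes, which upon taking square roots and regrouping the $\xi$-derivatives yields $\tau=2n(2n+1)[(\alpha\pm\beta)^2-\xi(\alpha\pm\beta)]$; and the Ricci factor $\alpha^2+\beta^2-\xi(\beta)=0$ vanishes, in which regime the residual bracket forces $\tfrac{\tau}{2n(2n+1)}=\pm(2\alpha\beta-\xi(\alpha))$, i.e.\ $\tau=\pm2n(2n+1)(2\alpha\beta-\xi(\alpha))$; together these make up conclusion 2.

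The main obstacle I anticipate is bookkeeping rather than conceptual: one must be scrupulous about which slots carry $\xi$ so that all $\sigma(\cdot,\xi)$-terms (and the $R^\bot$ contribution) drop out cleanly when $Z=\xi$, and one must correctly absorb the concircular $\tau$-term into the coefficient coming from (\ref{8}). A secondary subtlety is the sign bookkeeping when the difference of squares is split, together with the observation that the $\tau$-family $\tau=\pm2n(2n+1)(2\alpha\beta-\xi(\alpha))$ is precisely what the vanishing Ricci factor $\alpha^2+\beta^2-\xi(\beta)=0$ contributes to the bracket; keeping these two regimes separate is what produces the two $\tau$-alternatives recorded in conclusion 2.
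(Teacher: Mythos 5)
Your proposal follows the paper's proof essentially step for step: the same specialization $X=U=W=\xi$, the same extraction of the non-zero components, the same reduction at $Z=\xi$ to $S(\xi,\xi)\,\sigma(V,\mathcal{C}(\xi,Y)\xi)=0$ with $S(\xi,\xi)=-2n(\alpha^2+\beta^2-\xi(\beta))$, the same $Y\mapsto\phi Y$ substitution via (\ref{20}), and the same factored difference-of-squares equation at the end. Even your loose final step, where the vanishing of the Ricci factor $\alpha^2+\beta^2-\xi(\beta)$ is said to ``force'' $\tau=\pm2n(2n+1)(2\alpha\beta-\xi(\alpha))$ (strictly speaking, once that factor vanishes the equation holds with no constraint on $\tau$), mirrors exactly the paper's own unexplained passage from its factorization to the stated conclusion 2.
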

\begin{proof}
$Q(S,\mathcal{C}\cdot\sigma)=0$ is of the form
\begin{eqnarray*}
(\mathcal{C}(X,Y)\cdot\sigma)((U\wedge_SV)Z,W)+(\mathcal{C}(X,Y)\cdot\sigma)(Z,(U\wedge_SV)Z)=0,
\end{eqnarray*}
for all $X,Y,U,V,Z\in\Gamma(T\widetilde{M})$.\\
Also these decompositions give us for $Z=U=W=\xi$
\begin{eqnarray*}
&&(\mathcal{C}(\xi,Y)\cdot\sigma)(S(V,Z)\xi-S(Z,\xi)V,\xi)+(\mathcal{C}(\xi,Y)\cdot\sigma)(Z,S(V,\xi)-S(\xi,\xi)V)\nonumber\\
&=&(\mathcal{C}(\xi,Y)\cdot\sigma)(S(V,Z)\xi,\xi)-(\mathcal{C}(\xi,Y)\cdot\sigma)(S(Z,\xi)V,\xi)\nonumber\\
&+&(\mathcal{C}(\xi,Y)\cdot\sigma)(Z,S(V,\xi)\xi)-(\mathcal{C}(\xi,Y)\cdot\sigma)(S(\xi,\xi)V,Z)=0.
\end{eqnarray*}
From these expressions are expansioned, we have non-zero components
\begin{eqnarray}
&&S(\xi,Z)\sigma(V,\mathcal{C}(\xi,Y)\xi)-S(\xi,V)\sigma(Z,\mathcal{C}(\xi,Y)\xi)-R^\bot(\xi,Y)S(\xi,\xi)\sigma(V,Z)\nonumber\\
&&+S(\xi,\xi)\sigma(\mathcal{C}(\xi,Y)Z,V)+S(\xi,\xi)\sigma(Z,\mathcal{C}(\xi,Y)V)=0.\label{39}
\end{eqnarray}
Next, replacing $\xi$ by $Z$ in (\ref{39}), by using (\ref{6}) and (\ref{16}), we can see
\begin{eqnarray*}
&&S(\xi,\xi)\sigma(\mathcal{C}(\xi,Y)\xi,V)=S(\xi,\xi)(\alpha^2+\beta^2-\xi(\beta)-\frac{\tau}{2n(2n+1)})\sigma(V,Y)+\nonumber\\
&+&S(\xi,\xi)(2\alpha\beta-\xi(\alpha))\phi\sigma(V,Y)=0,
\end{eqnarray*}
that is,
\begin{eqnarray*}
&&2n(\alpha^2+\beta^2-\xi(\beta))\left\lbrace[\alpha^2+\beta^2-\xi(\beta-\frac{\tau}{2n(2n+1)})]^2-[2\alpha\beta-\xi(\alpha)]^2 \right\rbrace\sigma(V,Y)\nonumber\\
&&=0.
\end{eqnarray*}
This completes the proof.
\end{proof}
Nex, we will now build an example to illustrate our topic.
\begin{example}
Let
$\widetilde{M}^5=\{(x_1,x_2,x_3,x_4,t)\in\textbf{E}^5:t\neq{0}\}$be
a 5-dimensional differentiable manifold with the standart coordinate
system $(x_1,x_2,x_3,x_4,t)$. Then vector fields
\begin{eqnarray*}
E_1&=&t(\frac{\partial}{\partial{x_1}}+x_2\frac{\partial}{\partial{t}}), \ \ E_2=t\frac{\partial}{\partial{x_2}}, \ \  E_3=t(\frac{\partial}{\partial{x_3}}+x_4\frac{\partial}{\partial{t}}),\\
E_4&=&t\frac{\partial}{\partial{x_4}},\ \
E_5=\xi=\frac{\partial}{\partial{t}}
\end{eqnarray*}
are the linear independent at each points of $\widetilde{M}$, that
is, these vector fields a basis of tangent space of $\widetilde{M}$.
Now, we define the contact structure and metric tensor $\phi$ and
$g$ by
\begin{eqnarray*}
\phi{E_1}=-E_2,\ \  \phi{E_2}=E_1,\ \ \phi{E_3}=-E_4,\ \
\phi{E_4}=E_3,\ \ \phi{E_5}=0,
\end{eqnarray*}
and
\begin{eqnarray*}
g(E_i,E_j)&=&\delta_{ij},\ \ 1\leq{i,j}\leq{4}\nonumber\\
g(E_5,E_5)&=&-1
\end{eqnarray*}
then we can easily verify that
\begin{eqnarray*}
\phi^2X=X+\eta(X)\xi, \ \ g(\phi{X},\phi{Y})=g(X,Y)+\eta(X)\eta(Y),
\ \ \eta(X)=g(X,\xi).
\end{eqnarray*}
So, $\widetilde{M}^5(\phi,\xi,\eta,g)$ is a 5-dimensional contact
metric manifold. By direct calculations, we can get the non-zero
components of Lie-bracket as
\begin{eqnarray*}
[E_i,E_5]=-\frac{1}{t}E_i, \ \ 1\leq{i}\leq{4},
\end{eqnarray*}
\begin{eqnarray*}
[E_1,E_2]=x_2E_2-t^2E_5, \ \ [E_1,E_3]=-x_4E_1+x_2E_3, \ \
[E_1,E_4]=x_2E_4.
\end{eqnarray*}
In view of Kozsul formulae, we can find the following non-zero
components of connections as
\begin{eqnarray*}
\widetilde{\nabla}_{E_1}E_5=-\frac{1}{t}E_1+\frac{1}{2}t^2E_2, \ \widetilde{\nabla}_{E_2}E_5=-\frac{1}{2}t^2E_1-\frac{1}{t}E_2\\
\widetilde{\nabla}_{E_3}E_5=-\frac{1}{t}E_3+\frac{1}{2}t^2E_4, \ \
\widetilde{\nabla}_{E_4}E_5=-\frac{1}{2}t^2E_3-\frac{1}{t}E_4.
\end{eqnarray*}
By the straightforward calculations, by using (\ref{5}),we can observe $\alpha=\frac{1}{2}t^2$ and $\beta=-\frac{1}{t}$. This tell us that $\widetilde{M}^5(\phi,\xi,\eta,g)$ is a 5-dimensional trans-Sasakain manifold with $\alpha=\frac{1}{2}t^2$ and $\beta=-\frac{1}{t}$.\\
    Now, we consider vector fields
    \begin{eqnarray*}
        e_1=t(\frac{\partial}{\partial{x_1}}+\frac{\partial}{\partial{x_3}}+(x_2+x_4)\frac{\partial}{\partial{t}}), \ \ e_2=t(\frac{\partial}{\partial{x_2}}+\frac{\partial}{\partial{x_4}}), \ \ e_3=\frac{\partial}{\partial{t}}.
    \end{eqnarray*}
    These vector fields are the linear dependent. By $\textbf{D}$, let's denote the distribution spanned by these vectors. One can observe $\textbf{D}$ is integrable and involutive. By $M$ we denote the its integral manifold, then $M$ manifold is a submanifold of $\widetilde{M}^5(\phi,\xi,\eta,g)$. On can easily to see that
    \begin{eqnarray*}
        \phi{e_1}=-e_2, \ \ \phi{e_2}=e_1, \ \ and \ \ \phi{e_3}=0.
    \end{eqnarray*}
    This tell us that $M$ is an invariant submanifold of a trans-Sasakaim manifold $\widetilde{M}^5(\phi,\xi,\eta,g)$. On the other hand, by direct calculations, we verify that $\sigma(e_1,e_2)=0$, that is, $M$ is totally geodesic submanifold and $\xi(\alpha)=t$, $\xi(\beta)=\frac{1}{t^2}$.
\end{example}

\end{document}